\tikzset{>=stealth',every on chain/.append style={join},
	every join/.style={->}}
\tikzstyle{labeled}=[execute at begin node=$\scriptstyle,
\newcommand*{\ADR}{University of Sopron,  Institute of Mathematics, Hungary. \textit{nemeth.laszlo@uni-sopron.hu}}
\def\H{${\cal H}$}
\def\T{${\cal T}$}
\def\E{${\cal E}$}
\newtheorem{theorem}{Theorem}
\newtheorem{corollary}{Corollary}
\newtheorem{rem}{Remark}
\newcommand*{\TIT}{Tetrahedron trinomial coefficient transform}
\title{\bf \TIT}
\author{L\'aszl\'o N\'emeth\footnote{\ADR}}
\date{}
\begin{document}

\maketitle \thispagestyle{empty}

\begin{abstract}
We introduce the tetrahedron trinomial coefficient transform which takes a Pascal-like arithmetical triangle to a sequence. We define a Pascal-like infinite tetrahedron \H, and prove that the application of the tetrahedron trinomial transform to one face \T\ of \H\  provides the opposite edge \E\  to \T\  in \H. 
It follows from the construction that the other directions in \H\  parallel to \E\ can be obtained similarly. 
In case of Pascal's triangle the sequence generated by the trinomial transform coincides the binomial transform of the central binomial coefficients. \\[1mm]
The  final  publication  is  available  at \href{http://math.colgate.edu/~integers/}{INTEGERS, Volume 19}. \\[1mm]
 {\em Key Words}: trinomial coefficient, Pascal's pyramid, Pascal's triangle,  binomial coefficient.\\
{\em MSC code}:  11B65, 11B75, 05A10.\\

\end{abstract}


\section{Introduction}

The binomial transform of the sequence $\{a_n\}_0^\infty\in\mathbb{R}^{\infty}$ is defined by the terms $b_n=\sum_{i=0}^{n}\binom{n}{i} a_i$ of the sequence $\{b_n\}_0^\infty\in\mathbb{R}^{\infty}$.
Several studies  \cite{Barbero,Nemeth_binom,Nemeth_trinom,Pan,Spivey} examine the properties and the generalizations of the binomial transform. Now we introduce a 3-dimensional generalization.  

Let an arithmetical triangle \T\ be given  by  $t_j^{i}\in \mathbb{R}$, where $0\leq j \leq i$ and $i,j\in \mathbb{N}$, and the items $t_j^{i}$ are arranged in rows and columns according to indices $i$ and $j$, respectively.  For example, in case of Pascal's triangle the items $t_j^{i}=\binom{i}{j}$ are the classical binomial coefficients.
Let the sequence $\{b_n\}_{n=0}^\infty$ be the tetrahedron trinomial coefficient transform (in short tetrahedron coefficients transfom) on \T\ defined by 
\begin{equation}\label{def:tetrahedron_coeff_transf}
	b_{n} =\sum_{i=0}^{n} \sum_{j=0}^{i} \binom{n}{j,n-i,i-j} t_{j}^{i},
\end{equation}
where the symbol
\begin{equation}\label{def:tetrahedron_coeff}
	\binom{n}{p,q,r}=\frac{n!}{p!\,q!\,r!}=\binom{n}{p}\binom{n-p}{q}
\end{equation} denotes the tetrahedron trinomial coefficient, $n$, $p$, $q$, $r$ are non-negative integers, and $p+q+r=n$. Thus formula \eqref{def:tetrahedron_coeff_transf} with the binomial coefficients is $$b_n=\sum_{i=0}^{n} \sum_{j=0}^{i} \binom{n}{j}\binom{n-j}{n-i}  t_{j}^{i}.$$

The trinomial coefficients have two different combinatorial meanings in use. One is $\binom{n}{p}_2=\sum_{q=0}^{p}\binom{n}{q}\binom{q}{p-q}$, which is the special case of the multinomial coefficient  (see more in \cite{Andrews,Bel2008}). The other interpretation is  the so-called Pascal's pyramid (more precisely Pascal's tetrahedron). Here we use the second meaning (see \eqref{def:tetrahedron_coeff}) and, referring to its geometric origin, we call it tetrahedron trinomial coefficients (in short tetrahedron coefficients).

We note that the seemingly more natural transform
\begin{equation*}
	b^*_{n} =\sum_{i=0}^{n} \sum_{j=0}^{i} \binom{n}{i,j,n-i-j} t_{j}^{i}
\end{equation*}
leads to the same result because of the symmetry of Pascal's tetrahedron.

In this article, we define an arithmetic structure similar to an infinite tetrahedron, where the elements are arranged in levels, rows and columns. Let the tetrahedron \H\  be defined the following way. A (infinite) face of this Pascal-like tetrahedron is the triangle \T\ and let the other elements be given recursively by the sum of the three items according to Figure~\ref{fig:construction_of_tetrahedron}. The exact definition of $h_{j,k}^i$ is 
\begin{eqnarray*}
	h_{j,0}^i&=&t_j^{i}, \hspace{10.3em}  (0\leq j \leq i),\\
	h_{j,k}^{i}&=&h_{j,k-1}^{i-1}+h_{j,k-1}^{i}+h_{j+1,k-1}^{i},\qquad (1\leq k), 
\end{eqnarray*}
where $0\leq k \leq i$, $0\leq j \leq i-k$ and $i,j,k\in \mathbb{N}$.
The indices $i$, $j$ and $k$ show the position of an item in level $i$, in row $k$ parallel to \T\ and in column $j$.

\begin{figure}[ht!]
	\centering
	\includegraphics{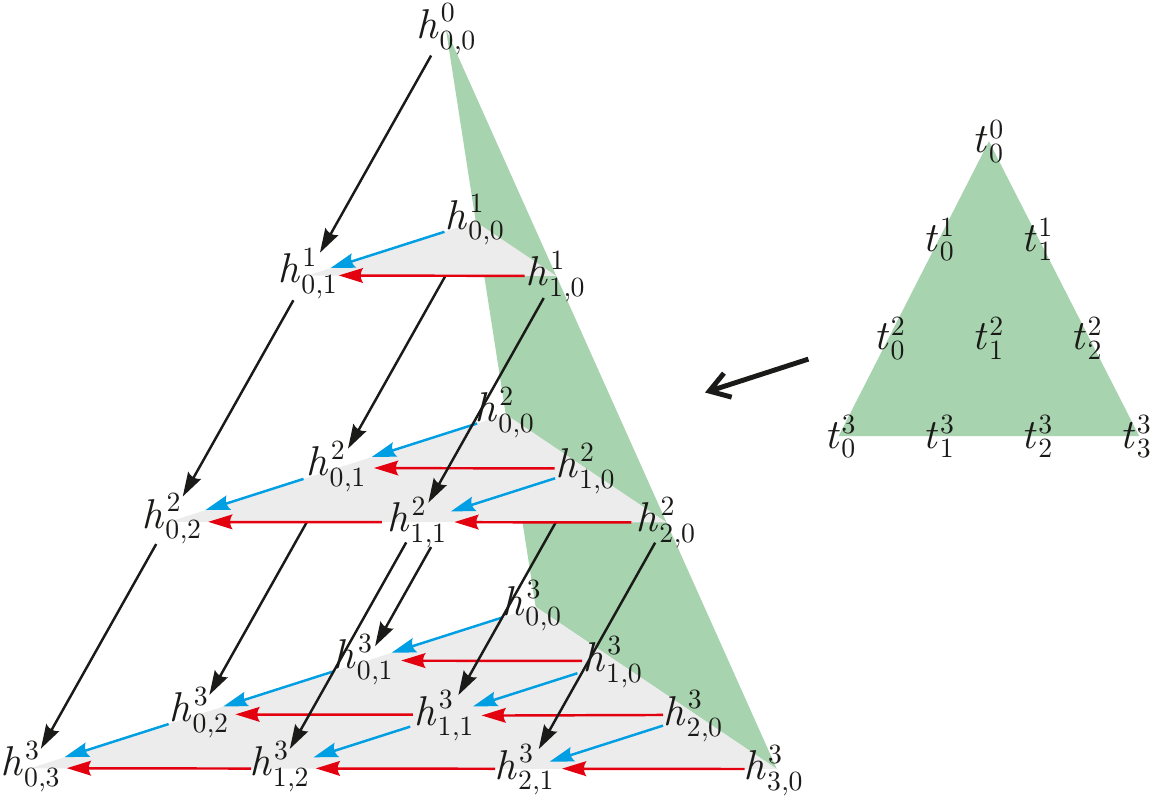}  
	\caption{Construction of the tetrahedron \H}
	\label{fig:construction_of_tetrahedron}
\end{figure}

We shall give some properties of this tetrahedron and show that the edge sequence $\{h_{0,n}^n\}$ of \H\ is the tetrahedron coefficient transform sequence of  \T. Especially, focusing on  Pascal's triangle as \T\ we show, that its tetrahedron coefficient transform is the sequence $b_n=h_{0,n}^n=\sum_{i=0}^{n}\binom{n}{i}\binom{2i}{i}$. Hence  $\{b_n\}$ is the binomial transform of the central binomial coefficients $\binom{2i}{i}$.  Moreover, beside some properties of \H, we gain an identity of binomial coefficients, which is very similar to Vandermonde's convolution formula and a generalization of the well-known identity $\sum_{i=0}^{n} \binom{n}{i}^2  = \binom{2n}{n}$. 
Two similar articles deal with a generalized binomial transform triangle and the trinomial transform triangle in the plane, for more details see \cite{Nemeth_binom,Nemeth_trinom}.

\subsection{Pascal's tetrahedron}

There are several studies dealing with  Pascal's tetrahedron and tetrahedron (trinomial) coefficients (ex.\ \cite{Anatriello,Belb-Szal,Bondarenko,Nemeth_hyppyramid}). In this section,  we give a short summary,  in particular about the properties which will be used later. 

Definition \eqref{def:tetrahedron_coeff} implies immediately the symmetries in variable $p$, $q$ and $r$.

The tetrahedron coefficients also satisfy the following recursive properties. \\
For each $n\geq 0$
$$\binom{n}{n,0,0}=\binom{n}{0,n,0}=\binom{n}{0,0,n}=1.$$
If $n \geq 1$, $pqr\ne 0$, then 
$$\binom{n}{p,q,r}=\binom{n-1}{p-1,q,r}+\binom{n-1}{p,q-1,r}+\binom{n-1}{p,q,r-1},$$
in other cases the so-called Pascal's rule holds, for example, $k=0$ and $pq\ne 0$, $n \geq 1$ give
$$\binom{n}{p,q,0}=\binom{n-1}{p-1,q,0}+\binom{n-1}{p,q-1,0}.$$

Moreover, the tetrahedron coefficients are arranged in Pascal's pyramid (or more precisely  Pascal's tetrahedron), see, e.g., \cite{Anatriello,Bondarenko}. In case of a fixed $n$, we gain the triangular shape level~$n$ of Pascal's tetrahedron. 
Pascal's tetrahedron has been developed by laying the levels below each other. The recursive properties yield that its sides are Pascal's triangles and inside the tetrahedron an item is the sum of the three items directly above it. For some illustrations and its generalization to the 3-dimensional hyperbolic space see \cite{Nemeth_hyppyramid}. Figure~\ref{fig:Pascal_tetrahedron} depicts Pascal's pyramid up to level~4 not in a usual "up-to-down" way, but the "left-to-right" representation which is more suitable for our discussion in the next section.    

\begin{figure}[!ht]
	\centering
	\includegraphics{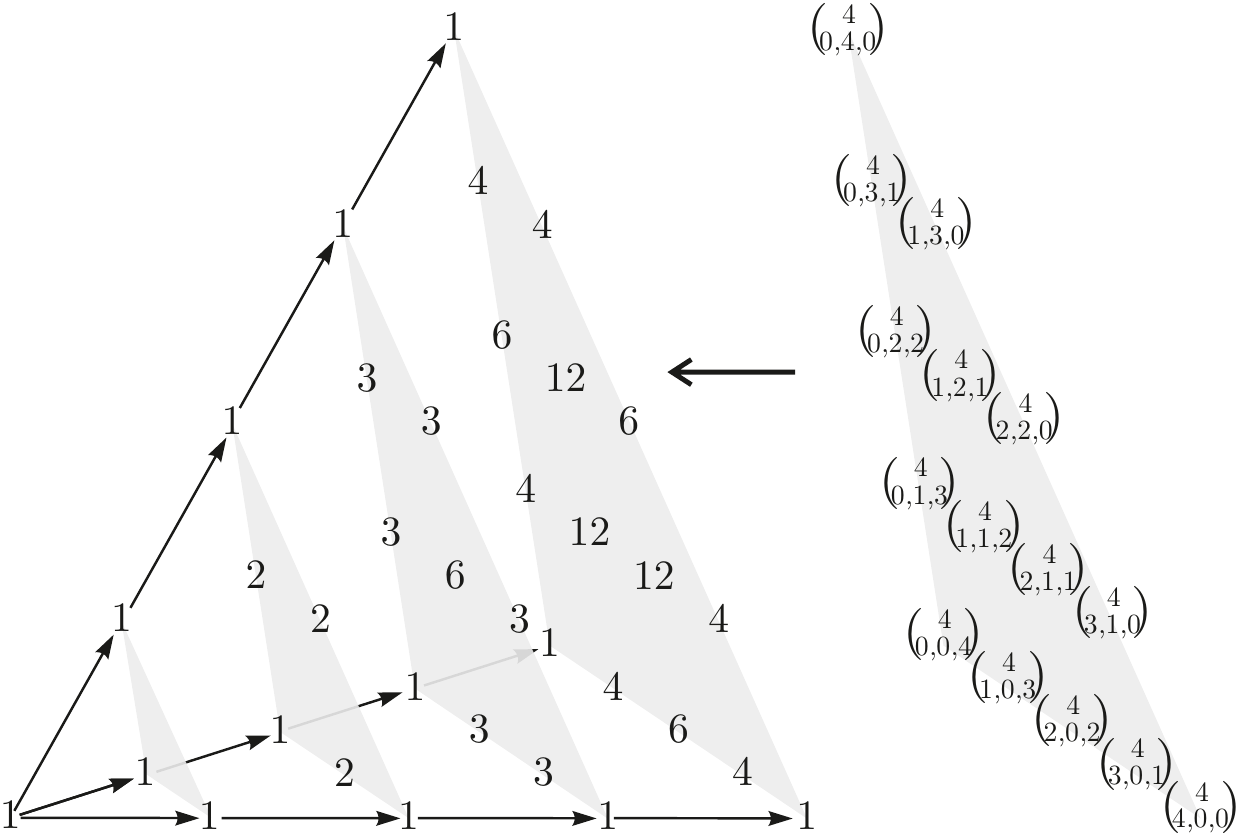}  
	\caption{Pascal's tetrahedron}
	\label{fig:Pascal_tetrahedron}
\end{figure}

\section{Tetrahedron coefficient transform and tetrahedron}

The value of item $h^i_{j,k}$  of \H\ is determined by the value of the terms $t^i_{j}$ of \T\ with tetrahedron coefficients. The next theorem gives the exact formula. For its geometrical background we have to merge the tetrahedron  \H\ and  Pascal's tetrahedron (Figures \ref{fig:construction_of_tetrahedron} and \ref{fig:Pascal_tetrahedron}). 
Putting the start vertex  (the first 1) of the  Pascal's tetrahedron to the position of $h^i_{j,k}$, the value of $h^i_{j,k}$ is the sum of the products of the elements of \T\ and the elements in level~$k$ of Pascal's tetrahedron which are in the same positions. For example, Figure~\ref{fig:tetra_and_Pas} shows the construction of $h_{1,2}^3$ by the help of the 2\textsuperscript{nd} layer of Pascal's tetrahedron and a suitable part of \T.

\begin{figure}[!ht]
	\centering
	\includegraphics{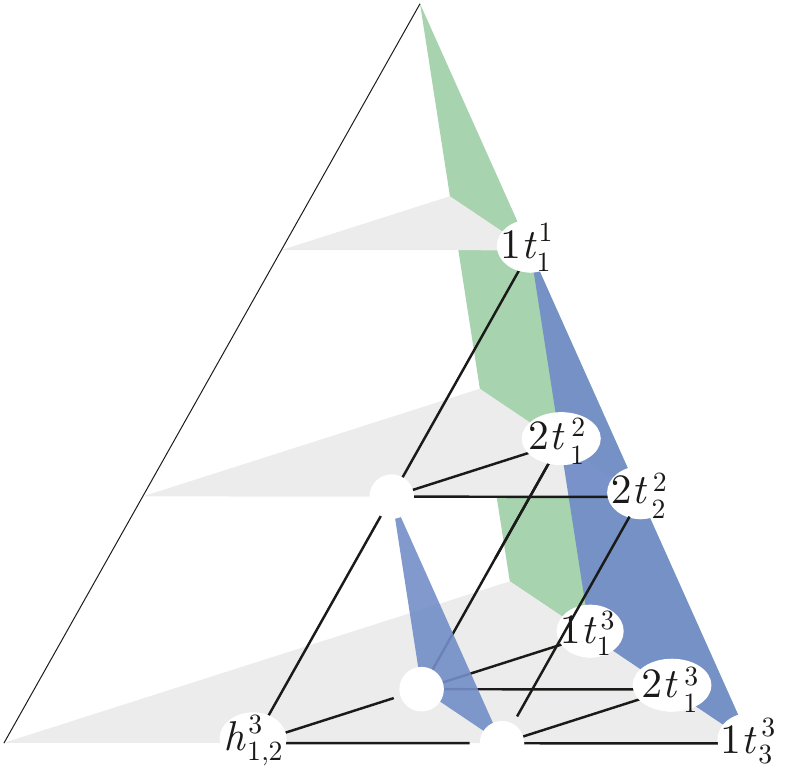}  
	\caption{Value of $h_{1,2}^3$ from Pascal's tetrahedron and triangle \T}
	\label{fig:tetra_and_Pas}
\end{figure}

\begin{theorem} \label{th:tetra_base}
For any $0\leq k \leq i$, $0\leq j \leq i-k$ we have
	\begin{equation*}
	h^i_{j,k}
	=\sum_{r=0}^{k} \sum_{s=0}^{r} \binom{k}{s,k-r,r-s} t_{j+s}^{i-k+r}.
	\end{equation*}
\end{theorem}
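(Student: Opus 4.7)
The plan is to prove the formula by induction on $k$, the row index parallel to the face $\mathcal{T}$. For the base case $k=0$ the inner sum reduces to the single term with $r=s=0$, giving $\binom{0}{0,0,0}t_j^i=t_j^i=h^i_{j,0}$, which matches the definition.

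For the inductive step, I would assume the formula holds for $k-1$ and apply the defining recursion
\begin{equation*}
h^i_{j,k} = h^{i-1}_{j,k-1} + h^i_{j,k-1} + h^i_{j+1,k-1}
\end{equation*}
to express $h^i_{j,k}$ as a sum of three double sums indexed by variables $r',s'$ running from $0$ to $k-1$. The next step is to perform index substitutions so that each double sum is expressed in terms of the target variables $r,s$ where $t_{j+s}^{i-k+r}$ appears. In the first term I keep $r=r'$, $s=s'$. In the second term (coming from $h^i_{j,k-1}$, which raises the upper index by $1$ compared to the first term) I set $r=r'+1$, $s=s'$. In the third term (coming from $h^i_{j+1,k-1}$, which also shifts the lower index of $t$) I set $r=r'+1$, $s=s'+1$. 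After these substitutions, the three sums combine into a single sum over $0\leq s\leq r\leq k$ of $t_{j+s}^{i-k+r}$ times an explicit sum of three $\binom{k-1}{\cdot,\cdot,\cdot}$ terms.

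The core of the proof is then to recognize that for each $(r,s)$ in the interior range, the sum of the three binomial contributions
\begin{equation*}
\binom{k-1}{s-1,k-r,r-s}+\binom{k-1}{s,k-r-1,r-s}+\binom{k-1}{s,k-r,r-s-1}
\end{equation*}
is precisely $\binom{k}{s,k-r,r-s}$ by Pascal's rule for trinomial coefficients, stated in Section~1. The main (really only) obstacle is careful bookkeeping of the boundary cases $r=0$, $r=k$, $s=0$, and $s=r$, where one or two of the three sums do not contribute because the shifted index falls outside the summation range. In each of these cases the Pascal rule still applies in its degenerate form because the missing terms correspond to $\binom{k-1}{\cdot,\cdot,\cdot}$ with a negative entry, which we take to be zero. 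Checking these boundaries directly against $\binom{k}{s,k-r,r-s}$ completes the induction.
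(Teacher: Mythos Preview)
Your proposal is correct and follows essentially the same induction on $k$ as the paper: apply the recursion, substitute the induction hypothesis into each of the three summands, re-index so that every term is a multiple of $t_{j+s}^{i-k+r}$, and collapse the three trinomial coefficients via Pascal's rule. The only cosmetic difference is that the paper carries out the boundary bookkeeping explicitly (splitting off the $r=0$, $r=k$, $s=0$, $s=r$ cases term by term), whereas you absorb those cases into the convention $\binom{k-1}{\cdot,\cdot,\cdot}=0$ when an argument is negative; both are equivalent.
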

\begin{proof} The proof uses induction on $k$. 
	If $k=0$, then the statement trivially holds ($h_{j,0}^i=t_j^i$).
For clarity, in the case $k=1$ we have 	
\begin{eqnarray*}
 h_{j,1}^i&=&h_{j,0}^{i-1}+h_{j,0}^{i}+h_{j+1,0}^{i}
		=\binom{1}{0,1,0}h_{j,0}^{i-1}+\binom{1}{0,0,1}h_{j,0}^{i}+\binom{1}{1,0,0}h_{j+1,0}^{i}\\
		&=&\sum_{r=0}^{1} \sum_{s=0}^{r} \binom{1}{s,1-r,r-s} t_{j+s}^{i+r-1}.
\end{eqnarray*} 
Let us suppose that the theorem is true for up to $k-1$. Thus for any $i$ and $j$ the  induction hypothesis is given by
	$$h^i_{j,k-1}
	=\sum_{r=0}^{k-1} \sum_{s=0}^{r} \binom{k-1}{s,k-r-1,r-s} t_{j+s}^{i-k+r-1}.$$

\noindent Then using the properties of the tetrahedron and binomial coefficients we have 
\begin{eqnarray*}
		h_{j,k}^{i}&=&h_{j,k-1}^{i-1}+h_{j,k-1}^{i}+h_{j+1,k-1}^{i}\\
		&=&
		\sum_{r=0}^{k-1} \sum_{s=0}^{r} \binom{k-1}{s,k-r-1,r-s} t_{j+s}^{i-k+r}
		+ \sum_{r=0}^{k-1} \sum_{s=0}^{r} \binom{k-1}{s,k-r-1,r-s} t_{j+s}^{i-k+r+1}
		\\&& +\sum_{r=0}^{k-1} \sum_{s=0}^{r} \binom{k-1}{s,k-r-1,r-s} t_{j+s+1}^{i-k+r+1} \\
   &=&
		\sum_{r=0}^{k-1} \sum_{s=0}^{r} \binom{k-1}{s,k-r-1,r-s} t_{j+s}^{i-k+r}
		+ \sum_{r=1}^{k} \sum_{s=0}^{r-1} \binom{k-1}{s,k-r,r-s-1} t_{j+s}^{i-k+r}
		\\&& +\sum_{r=1}^{k} \sum_{s=1}^{r} \binom{k-1}{s-1,k-r,r-s} t_{j+s}^{i-k+r}\\ 
		&=&
		\binom{k-1}{0,k-1,0} t_{j}^{i-k} + \sum_{r=1}^{k-1} \sum_{s=0}^{r} \binom{k-1}{s,k-r-1,r-s} t_{j+s}^{i-k+r}
		\\&&+
		\sum_{s=0}^{k-1} \binom{k-1}{s,k-k,k-s-1} t_{j+s}^{i-k+k} +\sum_{r=1}^{k-1} \sum_{s=0}^{r-1} \binom{k-1}{s,k-r,r-s-1} t_{j+s}^{i-k+r}
		\\&&  \sum_{s=1}^{k} \binom{k-1}{s-1,k-k,k-s} t_{j+s}^{i-k+k}+\sum_{r=1}^{k-1} \sum_{s=1}^{r} \binom{k-1}{s-1,k-r,r-s} t_{j+s}^{i-k+r}\\ 
		&=&
		t_{j}^{i-k} + \sum_{s=0}^{k-1} \binom{k-1}{s,0,k-s-1} t_{j+s}^{i}+ \sum_{s=1}^{k} \binom{k-1}{s-1,0,k-s} t_{j+s}^{i}
		\\&& +
		\sum_{r=1}^{k-1} \sum_{s=0}^{r} \binom{k-1}{s,k-r-1,r-s} t_{j+s}^{i-k+r}
		+\sum_{r=1}^{k-1} \sum_{s=0}^{r-1} \binom{k-1}{s,k-r,r-s-1} t_{j+s}^{i-k+r}
		\\&&  +\sum_{r=1}^{k-1} \sum_{s=1}^{r} \binom{k-1}{s-1,k-r,r-s} t_{j+s}^{i-k+r}\\ 
	\end{eqnarray*}	
\begin{eqnarray*}
\phantom{h_{j,k}^{i}}  &=&
		t_{j}^{i-k} + t_{j}^{i}+ t_{j+k}^{i}+ 
		\sum_{s=1}^{k-1} \left( \binom{k-1}{s,0,k-s-1} +  \binom{k-1}{s-1,0,k-s} \right)  t_{j+s}^{i}
		\\&& +
		\sum_{r=1}^{k-1} 
		\left(
		\binom{k-1}{0,k-r-1,r}t_{j}^{i-k+r} + \binom{k-1}{r,k-r-1,0}t_{j+r}^{i-k+r} \right.
		\\&& \qquad \qquad+\sum_{s=1}^{r-1} \binom{k-1}{s,k-r-1,r-s} t_{j+s}^{i-k+r} 
		\\&& \qquad \qquad+
		\binom{k-1}{0,k-r,r-1}t_{j}^{i-k+r} + \sum_{s=1}^{r-1} \binom{k-1}{s,k-r,r-s-1} t_{j+s}^{i-k+r}
		\\&& \qquad \qquad +\binom{k-1}{r-1,k-r,0} t_{j+r}^{i-k+r}+ \left.\sum_{s=1}^{r-1} \binom{k-1}{s-1,k-r,r-s} t_{j+s}^{i-k+r}
		\right)   \\
  &=&
		t_{j}^{i-k} + t_{j}^{i}+ t_{j+k}^{i}+ 
		\sum_{s=1}^{k-1}  \binom{k}{s,0,k-s}   t_{j+s}^{i}
		\\&& +
		\sum_{r=1}^{k-1} 
		\left( 
		\binom{k}{0,k-r,r} t_{j}^{i-k+r}+ \binom{k}{r,k-r,0}  t_{j+r}^{i-k+r} 
		+
		\sum_{s=1}^{r-1} \binom{k}{s,k-r,r-s} t_{j+s}^{i-k+r}
		\right)\\ 
			&=&
		t_{j}^{i-k} +\sum_{s=0}^{k}  \binom{k}{s,0,k-s}   t_{j+s}^{i}
		+
		\sum_{r=1}^{k-1} 
		\sum_{s=0}^{r} \binom{k}{s,k-r,r-s}  
		t_{j+s}^{i-k+r}\\
	&=&
		t_{j}^{i-k}  +
		\sum_{r=1}^{k} 
		\sum_{s=0}^{r} \binom{k}{s,k-r,r-s}  
		t_{j+s}^{i-k+r}
	\\	&=&
		\sum_{r=0}^{k} 
		\sum_{s=0}^{r} \binom{k}{s,k-r,r-s}
		t_{j+s}^{i-k+r}.
	\end{eqnarray*}
\end{proof}

\begin{theorem} The items $h^n_{0,n}$ of \H\ in edge \E\ opposite to \T\ form the tetrahedron coefficient transform sequence $\{b_n\}$ of \T.
\end{theorem}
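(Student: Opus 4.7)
The plan is to derive this directly from Theorem~\ref{th:tetra_base} by specialization of the indices, so essentially no new work beyond a substitution and a relabeling of summation variables is required.

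First, I would verify that the triple $(i,j,k)=(n,0,n)$ satisfies the hypotheses of Theorem~\ref{th:tetra_base}: we need $0\leq k\leq i$ and $0\leq j\leq i-k$, and with these values this reduces to $0\leq n\leq n$ and $0\leq 0\leq 0$, which hold. Hence we may plug into the formula
$$h^i_{j,k}=\sum_{r=0}^{k}\sum_{s=0}^{r}\binom{k}{s,k-r,r-s}\,t_{j+s}^{i-k+r}$$
obtained in Theorem~\ref{th:tetra_base}, getting
$$h^n_{0,n}=\sum_{r=0}^{n}\sum_{s=0}^{r}\binom{n}{s,n-r,r-s}\,t_{s}^{r}.$$

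Next, I would rename the summation indices by setting $i:=r$ and $j:=s$ to bring the expression into the form
$$h^n_{0,n}=\sum_{i=0}^{n}\sum_{j=0}^{i}\binom{n}{j,n-i,i-j}\,t_{j}^{i},$$
and observe that this is precisely the definition \eqref{def:tetrahedron_coeff_transf} of $b_n$. This yields $h^n_{0,n}=b_n$ for every $n\geq 0$, which is the claim.

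Since the entire argument is just a specialization of Theorem~\ref{th:tetra_base}, there is no real obstacle: the heavy lifting (the inductive recombination of trinomial coefficients by a Pascal-type identity) has already been carried out in the proof of Theorem~\ref{th:tetra_base}. The only thing worth double-checking is that the index conventions match exactly between the statement of that theorem and the definition of the transform, in particular that $p=j$, $q=n-i$, $r=i-j$ in \eqref{def:tetrahedron_coeff_transf} corresponds to $s=j$, $k-r=n-i$, $r-s=i-j$ after the substitution $k=n$. Once this bookkeeping is confirmed, the proof is complete in a single line.
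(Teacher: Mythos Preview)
Your proposal is correct and follows exactly the same route as the paper: specialize Theorem~\ref{th:tetra_base} at $(i,j,k)=(n,0,n)$ and observe that the resulting double sum is the defining expression \eqref{def:tetrahedron_coeff_transf} for $b_n$. Your additional remarks on verifying the index constraints and matching the trinomial arguments are just explicit bookkeeping that the paper leaves implicit.
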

\begin{proof}
Let $i=k=n$ and $j=0$. Then from Theorem~\ref{th:tetra_base} we gain
	\begin{equation*}
	h^n_{0,n}
	=\sum_{r=0}^{n} \sum_{s=0}^{r} \binom{n}{s,n-r,r-s} t_{s}^{r},
	\end{equation*}
which is the same as \eqref{def:tetrahedron_coeff_transf} if $b_n=h^n_{0,n}$.
\end{proof}

Re-indexing the result of Theorem~\ref{th:tetra_base} we obtain the values of $h^i_{j,k}$ by simple indexed forms of terms of $\mathcal{T}$.
\begin{corollary} For any $0\leq k \leq i$, $0\leq j \leq i-k$ we have
	\begin{equation*}
	h^i_{j,k}
	=\sum_{r=i-k}^{i} \sum_{s=j}^{j+k-i+r} \binom{k}{s-j,i-r,k+j+r-s-i} t_{s}^{r}.
	\end{equation*}
\end{corollary}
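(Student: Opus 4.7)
The plan is to obtain this corollary as an immediate consequence of Theorem~\ref{th:tetra_base} by a change of summation variables. Theorem~\ref{th:tetra_base} expresses $h^i_{j,k}$ as a double sum whose generic term is $\binom{k}{s,k-r,r-s}\, t_{j+s}^{i-k+r}$, indexed by the ``shifted'' variables $r\in[0,k]$ and $s\in[0,r]$. The corollary writes the same quantity in terms of the ``absolute'' indices of the entries of \T, i.e.\ using the actual row index $r'$ and column index $s'$ of the term $t_{s'}^{r'}$ that appears. So the whole content is a relabelling.

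Concretely, I would substitute $r' = i-k+r$ and $s' = j+s$, which are precisely the upper and lower indices of $t_{j+s}^{i-k+r}$. As $r$ runs from $0$ to $k$, the new index $r'$ runs from $i-k$ to $i$; for each such $r'$, the range $s\in[0,r]$ translates into $s'\in[j,\,j+r]=[j,\,j+k-i+r']$, matching the inner range stated in the corollary. The three entries of the tetrahedron coefficient transform accordingly: $s = s'-j$, $k-r = i-r'$, and $r-s = (r'-i+k)-(s'-j) = k+j+r'-s'-i$, which is exactly $\binom{k}{s'-j,\,i-r',\,k+j+r'-s'-i}$. Substituting everything into the formula of Theorem~\ref{th:tetra_base} and then renaming $r',s'$ back to $r,s$ yields the corollary.

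There is no real obstacle here; the only thing that needs attention is checking that the three arguments of the multinomial coefficient remain non-negative and sum to $k$ over the new index range, which they do: $s'-j\geq 0$ since $s'\geq j$, $i-r'\geq 0$ since $r'\leq i$, and $k+j+r'-s'-i\geq 0$ since $s'\leq j+k-i+r'$, while their sum is visibly $k$. Hence the re-indexed identity holds in precisely the same range as the original, and the proof is a one-line substitution.
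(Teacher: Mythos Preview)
Your proposal is correct and matches the paper's approach exactly: the paper presents this corollary as an immediate re-indexing of Theorem~\ref{th:tetra_base}, and your substitution $r'=i-k+r$, $s'=j+s$ is precisely that re-indexing, with the ranges and multinomial entries verified correctly.
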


In the following, we show that Pascal's rule (an item inside an arithmetical triangle is the sum of items directly above it) and symmetry are inherited in \H.  

\begin{theorem}If $2\leq i, 1\leq j\leq i-1$ and  $t^i_{j}=t^{i-1}_{j-1}+t^{i-1}_{j}$ hold for \T, then for any $0\leq k+2 \leq i$, $1\leq j \leq i-1-k$ we have	
	\begin{equation*}
		h^i_{j,k}=h^{i-1}_{j-1,k}+h^{i-1}_{j,k}.
	\end{equation*}
\end{theorem}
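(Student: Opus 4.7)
The plan is to reduce the claim directly to Pascal's rule for $\mathcal{T}$ by expanding all three quantities using Theorem~\ref{th:tetra_base}. First I would write
\begin{align*}
h^{i-1}_{j-1,k} &= \sum_{r=0}^{k}\sum_{s=0}^{r}\binom{k}{s,k-r,r-s}\,t^{i-1-k+r}_{j-1+s},\\
h^{i-1}_{j,k} &= \sum_{r=0}^{k}\sum_{s=0}^{r}\binom{k}{s,k-r,r-s}\,t^{i-1-k+r}_{j+s}.
\end{align*}
At matching indices $(r,s)$ the two double sums share the same tetrahedron coefficient and the same upper index $i-1-k+r$, so termwise addition yields
$$h^{i-1}_{j-1,k}+h^{i-1}_{j,k}=\sum_{r=0}^{k}\sum_{s=0}^{r}\binom{k}{s,k-r,r-s}\bigl(t^{i-1-k+r}_{j-1+s}+t^{i-1-k+r}_{j+s}\bigr).$$

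Next, I would invoke the hypothesised Pascal's rule on $\mathcal{T}$, namely $t^{i'}_{j'}=t^{i'-1}_{j'-1}+t^{i'-1}_{j'}$, with $i'=i-k+r$ and $j'=j+s$, so that the bracketed pair collapses to $t^{i-k+r}_{j+s}$. After this substitution the right-hand side is exactly the expansion of $h^i_{j,k}$ supplied by Theorem~\ref{th:tetra_base}, and the statement follows.

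The only step requiring care, and the one I would single out as the main obstacle, is verifying that the admissibility conditions $2\le i'$ and $1\le j'\le i'-1$ for Pascal's rule hold throughout the index rectangle $\{0\le s\le r\le k\}$; otherwise a boundary term could fail to collapse. From $k+2\le i$ we obtain $i'=i-k+r\ge i-k\ge 2$, and from $1\le j\le i-1-k$ together with $0\le s\le r\le k$ we get $j+s\ge 1$ and $j+s\le i-1-k+r=i'-1$. Thus the hypotheses of Pascal's rule are satisfied for every summand, and once this bookkeeping is handled the argument reduces to a one-line substitution.
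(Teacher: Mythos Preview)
Your argument is correct, and it differs from the paper's. The paper proves the statement by induction on $k$: the base case $k=0$ is the assumed Pascal rule on $\mathcal{T}$, and in the inductive step one expands $h^i_{j,k}$ via the defining recurrence $h^i_{j,k}=h^{i-1}_{j,k-1}+h^{i}_{j,k-1}+h^{i}_{j+1,k-1}$, applies the induction hypothesis to each of the three summands, and regroups. Your approach instead cashes in the explicit formula of Theorem~\ref{th:tetra_base} directly and pushes the work down to a termwise application of Pascal's rule on $\mathcal{T}$; the careful index check you isolate is exactly what is needed, and your verification of $2\le i'$, $1\le j'\le i'-1$ from the stated hypotheses is clean. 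The trade-off is that your proof depends on Theorem~\ref{th:tetra_base} (whose proof is the long computation in the paper), whereas the paper's induction uses only the recursive definition of $h$; on the other hand, once Theorem~\ref{th:tetra_base} is available, your route is shorter and conceptually transparent, since it shows the Pascal rule in $\mathcal{H}$ is literally the Pascal rule in $\mathcal{T}$ transported through a fixed linear combination.
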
	
\begin{proof} The proof is by induction on $k$. The case $k=0$ is trivial. Recall $h^i_{j,0}=t_j^i$. Suppose that the statement of the theorem holds for elements with index $k-1$. Then we have
\begin{eqnarray*}
	h_{j,k}^{i}&=&h_{j,k-1}^{i-1}+h_{j,k-1}^{i}+h_{j+1,k-1}^{i}\\
	&=&h_{j-1,k-1}^{i-2}+h_{j,k-1}^{i-2} +h_{j-1,k-1}^{i-1}+h_{j,k-1}^{i-1} +h_{j,k-1}^{i-1}+h_{j+1,k-1}^{i-1}\\
	&=&h_{j-1,k-1}^{i-2}+h_{j-1,k-1}^{i-1}+h_{j,k-1}^{i-1}+
	h_{j,k-1}^{i-2} +h_{j,k-1}^{i-1} +h_{j+1,k-1}^{i-1}\\
	&=&h^{i-1}_{j-1,k}+h^{i-1}_{j,k}.
\end{eqnarray*}
\end{proof}

\begin{theorem} If \T\ has a vertical symmetry axis, i.e., $t_j^i=t_{i-j}^i$ ($ 0\leq j\leq i$), then \H\ has a symmetry plane, more precisely 
	\begin{equation}\label{eq:h_symmetry}
	h_{j,k}^i=h_{i-(j+k),k}^i,
	\end{equation}
where $0\leq k \leq i$, $0\leq j \leq i-k$.
\end{theorem}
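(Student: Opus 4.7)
The cleanest route is to expand both sides of \eqref{eq:h_symmetry} via Theorem~\ref{th:tetra_base} and exhibit a bijective reindexing of the double sum that matches them, exploiting the full symmetry of the trinomial coefficient $\binom{k}{a,b,c}$ in its three lower arguments (noted at the start of Section~1.1).

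First, I apply Theorem~\ref{th:tetra_base} at the position $(i-(j+k),k)$ to obtain
\[ h_{i-(j+k),k}^{\,i} = \sum_{r=0}^{k}\sum_{s=0}^{r}\binom{k}{s,k-r,r-s}\, t_{\,i-(j+k)+s}^{\,i-k+r}. \]
Next, I invoke the hypothesis $t_\ell^{\,m}=t_{m-\ell}^{\,m}$ on $\mathcal{T}$ with $m=i-k+r$ and $\ell=i-(j+k)+s$. A quick computation gives $m-\ell = j+r-s$, so each factor $t_{\,i-(j+k)+s}^{\,i-k+r}$ becomes $t_{\,j+r-s}^{\,i-k+r}$.

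The final step is the substitution $s'=r-s$ in the inner sum: the range $0\le s\le r$ is preserved as $0\le s'\le r$, and the trinomial coefficient transforms into $\binom{k}{r-s',k-r,s'}$. Using the symmetry of the trinomial coefficient in its lower arguments, $\binom{k}{r-s',k-r,s'}=\binom{k}{s',k-r,r-s'}$, the double sum becomes precisely the closed form for $h_{j,k}^{\,i}$ supplied by Theorem~\ref{th:tetra_base}, which establishes \eqref{eq:h_symmetry}.

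I do not anticipate a genuine obstacle; the only care needed is the bit of index arithmetic giving $m-\ell=j+r-s$ and checking that $s\mapsto r-s$ is a self-bijection on $\{0,\ldots,r\}$. As an alternative, one could induct on $k$ using the defining recursion $h_{j,k}^i = h_{j,k-1}^{i-1}+h_{j,k-1}^i+h_{j+1,k-1}^i$: the inductive hypothesis turns the three summands into $h_{i-j-k,k-1}^{i-1}$, $h_{i-j-k+1,k-1}^i$, $h_{i-j-k,k-1}^i$, which are exactly the three terms in the recursion expanding $h_{i-(j+k),k}^i$; this is equally quick but demands slightly more index bookkeeping, so I prefer the Theorem~\ref{th:tetra_base} route.
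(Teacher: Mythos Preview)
Your argument is correct. Expanding $h^i_{i-(j+k),k}$ via Theorem~\ref{th:tetra_base}, applying the hypothesis $t^m_\ell=t^m_{m-\ell}$ (the index check $0\le \ell\le m$ goes through since $j\le i-k$ forces $i-j-k+s\ge 0$ and $s\le r$ forces $\ell\le m$), and then substituting $s'=r-s$ together with the permutation symmetry of $\binom{k}{\cdot,\cdot,\cdot}$ recovers the Theorem~\ref{th:tetra_base} expression for $h^i_{j,k}$ exactly.

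The paper, however, does \emph{not} go through Theorem~\ref{th:tetra_base}; it proves \eqref{eq:h_symmetry} by the short induction on $k$ that you describe as your ``alternative''. So your fallback is in fact the paper's chosen proof, while your preferred route is genuinely different. Your approach trades the inductive bookkeeping for reliance on the already-established closed form and the trinomial symmetry; it is arguably cleaner once Theorem~\ref{th:tetra_base} is in hand, but it makes the result logically downstream of that theorem. The paper's inductive proof is self-contained from the defining recursion and would survive even if Theorem~\ref{th:tetra_base} were removed, which is a mild structural advantage.
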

\begin{proof}
With induction on $k$, if $k=0$, then we gain $h_{j,0}^i=t_j^i=t_{i-j}^i=h_{i-(j+0),k}^i$. Suppose that \eqref{eq:h_symmetry} holds in the case $k-1$. Then we have
\begin{eqnarray*}
h_{i-(j+k),k}^i &=& h_{i-(j+k),k-1}^{i-1}+h_{i-(j+k),k-1}^{i}+h_{i-(j+k)+1,k-1}^{i}\\
&=&h_{j,k-1}^{i-1}+h_{j,k-1}^{i}+h_{j+1,k-1}^{i}=h_{j,k}^{i}.
\end{eqnarray*}
\end{proof}

\begin{theorem} \label{th:recursive_T}
Let us suppose that $t_0^i=c$ for any $i\geq0$, where $c$ is a constant and  Pascal's rule holds for \T. Moreover, let $m=j+k+1$. 
Then the sequence  $\{h_{j,k}^{i}\}_{i=j+k}^{\infty}$ is an $m^{\text{th}}$ order homogeneous linear recurrence sequence (in index $i$), given by
\begin{equation} \label{eq:recursive_T}
 \sum_{\ell=0}^{m}(-1)^{\ell}\binom{m}{\ell} h_{j,k}^{i+\ell}=0, 
\end{equation}
where $i\geq j+k$.
\end{theorem}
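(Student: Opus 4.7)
The plan is to interpret \eqref{eq:recursive_T} as the statement that the $m$-th finite difference (in the index $i$) of $\{h_{j,k}^i\}$ vanishes. Writing $E$ for the shift operator $i \mapsto i+1$, the left-hand side of \eqref{eq:recursive_T} is $(1-E)^m h_{j,k}^i$, and the operator $(1-E)^m = (-1)^m \Delta^m$ annihilates a sequence if and only if that sequence coincides with a polynomial in $i$ of degree less than $m$. It therefore suffices to prove that, on its domain $i \geq j+k$, the function $i \mapsto h_{j,k}^i$ agrees with a polynomial of degree at most $m-1 = j+k$.

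The crucial preparatory step is a lemma: under the hypotheses $t_0^i = c$ and Pascal's rule, each $t_j^i$ (for $i \geq j$) is polynomial in $i$ of degree at most $j$. I would prove this by induction on $j$. The base case $j = 0$ is immediate since $t_0^i = c$. For the inductive step, Pascal's rule rewritten as $t_j^{i+1} - t_j^i = t_{j-1}^i$ exhibits the forward difference $\Delta t_j^i$ as $t_{j-1}^i$, which is polynomial of degree at most $j-1$ by hypothesis; a telescoping summation then makes $t_j^i$ a polynomial of degree at most $j$.

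With this in hand I would invoke Theorem~\ref{th:tetra_base}, which gives
\[
h_{j,k}^i = \sum_{r=0}^{k} \sum_{s=0}^{r} \binom{k}{s, k-r, r-s}\, t_{j+s}^{i-k+r}.
\]
Each summand, viewed as a function of $i$ with $j, k, r, s$ fixed, is polynomial of degree at most $j+s \leq j+k$ in $i$, by the lemma applied to $t_{j+s}$. Hence $h_{j,k}^i$ is polynomial in $i$ of degree at most $j+k = m-1$, and applying the operator $(1-E)^m$ yields precisely \eqref{eq:recursive_T} for every $i \geq j+k$.

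The only delicate point is matching index ranges: the lemma produces a polynomial coinciding with $t_j^i$ only for $i \geq j$, so one must verify that every argument $i + \ell - k + r$ appearing after shifting $i$ by $\ell \in \{0, 1, \dots, m\}$ inside the formula of Theorem~\ref{th:tetra_base} satisfies $i + \ell - k + r \geq j + s$. This is immediate from $i \geq j+k$, $\ell \geq 0$, and $r \geq s$. Beyond this bookkeeping, the argument is a clean three-step reduction (polynomiality of $t$, polynomiality of $h$ via Theorem~\ref{th:tetra_base}, finite-difference characterization of polynomials) and I do not anticipate any further obstacle.
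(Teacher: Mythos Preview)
Your proof is correct and takes a genuinely different route from the paper's. The paper proceeds by a direct double induction: first on $j$ to establish the $k=0$ case $\sum_{\ell=0}^{j+1}(-1)^{\ell}\binom{j+1}{\ell}t_j^{i+\ell}=0$ via lengthy algebraic manipulation of the telescoping identity $t_j^{i+\ell}=t_j^i+\sum_{p=0}^{\ell-1}t_{j-1}^{i+p}$, and then on $k$ using the defining recursion $h_{j,k}^i=h_{j,k-1}^{i-1}+h_{j,k-1}^i+h_{j+1,k-1}^i$ together with Pascal's rule for binomial coefficients to reduce the $(j+k+1)$-term alternating sum to lower-$k$ instances. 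Both inductive steps involve substantial reindexing and splitting of sums.

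Your argument replaces all of this with a structural observation: the identity \eqref{eq:recursive_T} is precisely the vanishing of the $m$-th finite difference, hence equivalent to $i\mapsto h_{j,k}^i$ being polynomial of degree at most $j+k$. You then prove polynomiality of $t_j^i$ by a short induction on $j$ (your lemma), and invoke Theorem~\ref{th:tetra_base} to transport this to $h_{j,k}^i$. The gain is twofold: the proof is considerably shorter and more transparent as to \emph{why} the recurrence holds, and the induction on $k$ disappears entirely because Theorem~\ref{th:tetra_base} already packages that induction. The paper's approach, by contrast, is self-contained in that it does not rely on Theorem~\ref{th:tetra_base}, but at the cost of repeating work that theorem has already done. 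Your index-range check is correct; indeed once $h_{j,k}^i$ agrees with a polynomial for $i\geq j+k$, the shifted arguments $i+\ell\geq j+k$ automatically lie in that range.
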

\begin{proof} The left diagonal is constant in triangle \T, see Figure~\ref{fig:Constan_leftleg}.  The proof is by induction on $k$.
If $k=0$, then $h_{j,0}^{i}=t_j^i$ and 
\begin{equation*} 
\sum_{\ell=0}^{j+1}(-1)^{\ell}\binom{j+1}{\ell}t_{j}^{i+\ell}=0. 
\end{equation*}
We prove it by induction on $j$. 
If $j=0$, the $\sum_{\ell=0}^{1}(-1)^{\ell}\binom{1}{\ell} t_{0}^{i+\ell}=c-c=0$. 
For clarity, see Figure~\ref{fig:Constan_leftleg}. If $j=1$, then $\sum_{\ell=0}^{2}(-1)^{\ell}\binom{2}{\ell} t_{i}^{1+\ell}=t_i^1-2t_1^{i+1}+t_1^{i+2}=((i-1)c+a_1)-2(ic+a_1)+((i+1)c+a_1)=0$. 
Using  the induction hypothesis  $\sum_{\ell=0}^{j}(-1)^{\ell}\binom{j}{\ell} t_{j-1}^{i+\ell}=0$ for any $i\geq j$, then by $t_j^{i+\ell}=\sum_{p=0}^{\ell-1}t_{j-1}^{i+p} + t_{j}^{i}$ (from Pascal's rule) we obtain 
\begin{eqnarray*}
&&\!\!\!\!\!\!\!\!\!\!\!\!\!\!\!\!\!\!\!\!
	\sum_{\ell=0}^{j+1}(-1)^{\ell}\binom{j+1}{\ell}t_{j}^{i+\ell} \ = \  t_{j}^{i} +\sum_{\ell=1}^{j+1}(-1)^{\ell}\binom{j+1}{\ell}\left(t_{j}^{i}+\sum_{p=0}^{\ell-1}t_{j-1}^{i+p} \right)\\
	&=& t_i^j \sum_{\ell=0}^{j+1} (-1)^{\ell}\binom{j+1}{\ell} + \sum_{\ell=1}^{j}(-1)^{\ell}\left(\binom{j}{\ell-1}+\binom{j}{\ell}\right) \sum_{p=0}^{\ell-1}t_{j-1}^{i+p} \\
	& & \qquad + (-1)^{j+1}\binom{j+1}{j+1}  \sum_{p=0}^{j}t_{j-1}^{i+p}\\
	&=&  
	0 + \sum_{\ell=0}^{j-1}(-1)^{\ell+1} \binom{j}{\ell} \sum_{p=0}^{\ell}t_{j-1}^{i+p} + \sum_{\ell=1}^{j}(-1)^{\ell}\binom{j}{\ell} \sum_{p=0}^{\ell-1}t_{j-1}^{i+p}  + (-1)^{j+1} \sum_{p=0}^{j}t_{j-1}^{i+p}\\
	\end{eqnarray*}	
\begin{eqnarray*}
\phantom{h_{j,k}^{i}}	&=&  
(-1) t_{j-1}^{i} + \sum_{\ell=1}^{j-1}(-1)^{\ell+1} \binom{j}{\ell} \left(\sum_{p=0}^{\ell-1}t_{j-1}^{i+p} + t_{j-1}^{i\ell} \right)	
+ \sum_{\ell=1}^{j-1}(-1)^{\ell}\binom{j}{\ell}   \sum_{p=0}^{\ell-1}t_{j-1}^{i+p} \\ & & \qquad+ (-1)^{j}\binom{j}{j}   \sum_{p=0}^{j-1}t_{j-1}^{i+p}  	 
+ (-1)^{j+1}  \sum_{p=0}^{j-1}t_{j-1}^{i+p} + (-1)^{j+1}  t_{j-1}^{i+j}\\
&=& - t_{j-1}^{i} + \sum_{\ell=1}^{j-1}(-1)^{\ell+1} \binom{j}{\ell} t_{j-1}^{i+\ell}  + 0 + 0 + (-1)^{j+1}  t_{j-1}^{i+j}\\
&=& - \sum_{\ell=0}^{j}(-1)^{\ell}\binom{j}{\ell} t_{j-1}^{i+\ell}  =0.
\end{eqnarray*}

Supposing the result is true in the case $k-1$ for any $j$, similarly to the previous proof, we have 
\begin{eqnarray*}
\lefteqn{\sum_{\ell=0}^{j+k+1}(-1)^{\ell}\binom{j+k+1}{\ell}h_{j,k}^{i+\ell}=
 \sum_{\ell=0}^{j+k+1}(-1)^{\ell} \binom{j+k+1}{\ell} \left(h_{j,k-1}^{i-1+\ell}+h_{j,k-1}^{i+\ell}+h_{j+1,k-1}^{i+\ell}\right)} \\
 &=& 
   \left(h_{j,k-1}^{i-1}+h_{j,k-1}^{i}\right) + \left(h_{j,k-1}^{i+j+k}+h_{j,k-1}^{i+j+k+1}\right)
    + 
   \sum_{\ell=1}^{j+k}(-1)^{\ell} \binom{j+k+1}{\ell} \left(h_{j,k-1}^{i-1+\ell}+h_{j,k-1}^{i+\ell}\right) \\ 
  &&\qquad +  
  \sum_{\ell=0}^{(j+1)+k}(-1)^{\ell} \binom{(j+1)+k}{\ell} h_{(j+1),k-1}^{i+\ell}\\\\ 
 &=& 
   h_{j,k-1}^{i+j+k}+h_{j,k-1}^{i+j+k+1} + 
   \sum_{\ell=1}^{j+k}(-1)^{\ell} \binom{j+k}{\ell-1} \left(h_{j,k-1}^{i-1+\ell}+h_{j,k-1}^{i+\ell}\right)\\ 
  &&\qquad +  h_{j,k-1}^{i-1}+h_{j,k-1}^{i}+
   \sum_{\ell=1}^{j+k}(-1)^{\ell} \binom{j+k}{\ell} \left(h_{j,k-1}^{i-1+\ell}+h_{j,k-1}^{i+\ell}\right)+0\\
	\end{eqnarray*}	
\begin{eqnarray*}
\phantom{h_{j,k}^{i}} &=& 
  h_{j,k-1}^{i+j+k}+h_{j,k-1}^{i+j+k+1} + 
  \sum_{\ell=0}^{j+k-1}(-1)^{\ell+1} \binom{j+k}{\ell} \left(h_{j,k-1}^{i+\ell}+h_{j,k-1}^{i+1+\ell}\right)\\ 
  &&\qquad +  
   \sum_{\ell=0}^{j+k}(-1)^{\ell} \binom{j+k}{\ell} \left(h_{j,k-1}^{i-1+\ell}+h_{j,k-1}^{i+\ell}\right)\\
 &=& 
   \sum_{\ell=0}^{j+k}(-1)^{\ell+1} \binom{j+k}{\ell} \left(h_{j,k-1}^{i+\ell}+h_{j,k-1}^{i+1+\ell}\right)+0=0.
\end{eqnarray*}
\end{proof}

\begin{figure}[!ht]
	\centering
	\scalebox{0.99}{ \begin{tikzpicture}[->,xscale=2,yscale=0.8, auto,swap]
		\node(a00) at (0,0)    {$c$};
		
		\node (a01) at (-0.5,-1)   {$c$};
		\node (a02) at (0.5,-1)    {$a_1$};
		
		\node (a01) at (-1,-2)   {$c$};
		\node (a02) at (0,-2)   {$c+a_1$};	
		\node (a01) at (1,-2)   {$a_2$};
		
		\node (a02) at (-1.5,-3)   {$c$};		
		\node (a01) at (-0.5,-3)   {$2c+a_1$};
		\node (a01) at (0.5,-3)   {$c+a_1+a_2$};
		\node (a02) at (1.5,-3)   {$a_3$};	
		
		\end{tikzpicture}}
	\caption{Triangle \T\ with constant $c$ in the left diagonal}
	\label{fig:Constan_leftleg}
\end{figure}
\begin{rem}
	Belbachir and Szalay \cite[Theorem~1]{Belb} gave an explicit form for the items of the so-called Generalized Arithmetical Triangles (GAT). Applying this for $t_j^i$ with conditions of Theorem~\ref{th:recursive_T}   we have 
	$$t_j^i=c \sum_{p=0}^{i-j-1}\binom{i-2-p}{j-1} + \sum_{q=0}^{j-1}\binom{i-2-q}{j-1-q}a_{q+1}.$$
\end{rem}

\begin{corollary}
If  the items in the left and right diagonal are the same constant in  \T\  and Pascal's rule holds for the triangle, then \T\ is symmetric and the sequences  $\{h_{j,k}^{i}\}_{i=j+k}^{\infty}$ and $\{h_{i-(j+k),k}^{i}\}_{i=j+k}^{\infty}$ (where $i\geq j+k$) are the same $(j+k+1)^{\text{th}}$ order homogeneous linear recurrence sequence.
\end{corollary}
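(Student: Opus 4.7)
The plan is to reduce this corollary to the combination of three earlier facts: the symmetry of \T, the symmetry plane of \H\ in equation \eqref{eq:h_symmetry}, and the recurrence from Theorem~\ref{th:recursive_T}. Very little new work is required.

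First I would verify that under the hypotheses, \T\ is symmetric, i.e.\ $t_j^i=t_{i-j}^i$ for all $0\le j\le i$. The base cases $j=0$ and $j=i$ are the assumption $t_0^i=t_i^i=c$. I would proceed by induction on $i$: assuming symmetry for row $i-1$, Pascal's rule gives
\begin{equation*}
t_j^i=t_{j-1}^{i-1}+t_j^{i-1}=t_{(i-1)-(j-1)}^{i-1}+t_{(i-1)-j}^{i-1}=t_{i-j-1}^{i-1}+t_{i-j}^{i-1}=t_{i-j}^i,
\end{equation*}
where the last step is Pascal's rule applied to $t_{i-j}^i$.

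Second, having established that \T\ is symmetric, the hypotheses of the symmetry theorem for \H\ are met, and \eqref{eq:h_symmetry} yields $h_{j,k}^i=h_{i-(j+k),k}^i$ for every admissible triple $(i,j,k)$. Consequently the two sequences $\{h_{j,k}^i\}_{i=j+k}^\infty$ and $\{h_{i-(j+k),k}^i\}_{i=j+k}^\infty$ are termwise identical, and it therefore suffices to study the first one.

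Finally, the conditions $t_0^i=c$ (for all $i\ge 0$) and Pascal's rule are exactly those of Theorem~\ref{th:recursive_T}. Applying that theorem directly gives the $(j+k+1)^{\text{th}}$ order homogeneous linear recurrence \eqref{eq:recursive_T} for $\{h_{j,k}^i\}_{i=j+k}^\infty$, and via the identification above for $\{h_{i-(j+k),k}^i\}_{i=j+k}^\infty$ as well. There is essentially no obstacle here; the only point needing explicit attention is the symmetry of \T, after which the corollary is an immediate synthesis of previously proved statements.
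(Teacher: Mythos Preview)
Your proposal is correct and matches the paper's intent: the corollary is stated without proof, as an immediate consequence of the symmetry theorem \eqref{eq:h_symmetry} together with Theorem~\ref{th:recursive_T}. The one step you supply that the paper leaves tacit---the short induction showing that constant diagonals plus Pascal's rule force $t_j^i=t_{i-j}^i$---is exactly the missing link, and your argument for it is clean and correct.
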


\section{Tetrahedron with Pascal's triangle}

In this section, we consider the well-known Pascal's triangle as triangle \T. Now the items of \T\ are the binomial coefficients, so that $t_j^i=\binom{i}{j}$,  ($0\leq j \leq i$). Figure~\ref{fig:tranform_tetra_Pascal} illustrates the tetrahedron digraph which depicts the transformation.

\begin{figure}[h!]
	\centering
	\includegraphics{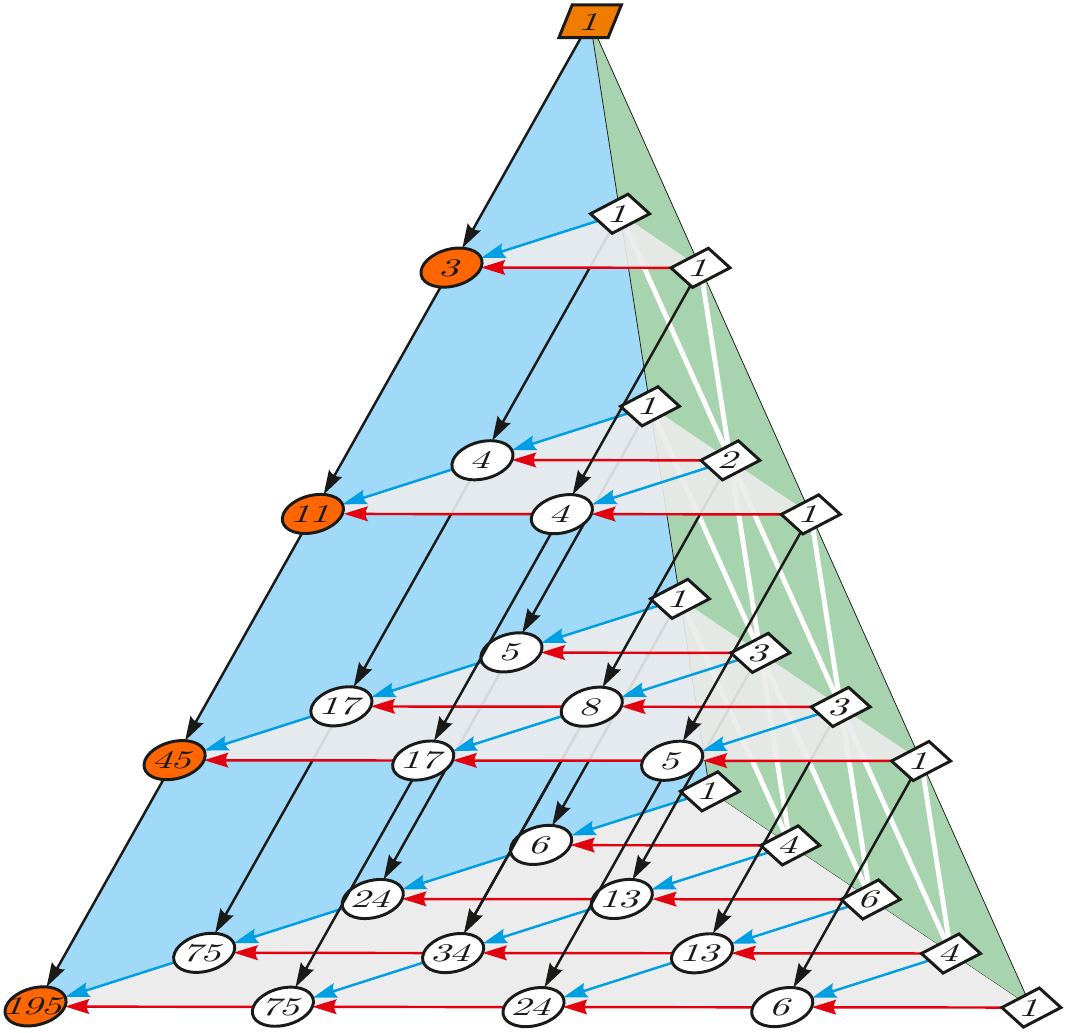}  
	\caption{Tetrahedron coefficient transform based on Pascal's triangle}
	\label{fig:tranform_tetra_Pascal}
\end{figure}

We give an explicit form for the elements of the tetrahedron.
\begin{theorem}\label{th:tetra_transf}
	For any $0\leq k \leq i$, $0\leq j \leq i-k$ we have
	\begin{equation}\label{eq:tetra_transf}
	h_{j,k}^i=\sum_{\ell=0}^{k}\binom{2\ell+i-k}{\ell+j}\binom{k}{\ell}.
	\end{equation}
\end{theorem}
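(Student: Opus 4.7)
The plan is to prove the formula by induction on $k$, directly exploiting the recurrence
$h_{j,k}^{i}=h_{j,k-1}^{i-1}+h_{j,k-1}^{i}+h_{j+1,k-1}^{i}$ that defines $\mathcal{H}$, rather than manipulating the triple-indexed sum coming from Theorem~\ref{th:tetra_base}. The base case $k=0$ is immediate since $h_{j,0}^{i}=t_{j}^{i}=\binom{i}{j}=\binom{i}{j}\binom{0}{0}$, which matches the right-hand side.

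For the inductive step, assume \eqref{eq:tetra_transf} holds at level $k-1$ for every admissible pair of indices. Writing out the three summands of the recurrence gives, after shifting their upper row indices appropriately,
\begin{equation*}
h_{j,k}^{i}=\sum_{\ell=0}^{k-1}\binom{k-1}{\ell}\left[\binom{2\ell+i-k}{\ell+j}+\binom{2\ell+i-k+1}{\ell+j}+\binom{2\ell+i-k+1}{\ell+j+1}\right].
\end{equation*}
The next move is to collapse the last two binomials by Pascal's rule into $\binom{2\ell+i-k+2}{\ell+j+1}$, leaving only two terms inside the bracket.

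Then I would re-index the sum containing $\binom{2\ell+i-k+2}{\ell+j+1}$ by substituting $m=\ell+1$, so that both remaining sums range over the same values of the upper and lower entries $\binom{2\ell+i-k}{\ell+j}$. Matching coefficients of $\binom{2\ell+i-k}{\ell+j}$ for $1\le\ell\le k-1$ then produces $\binom{k-1}{\ell}+\binom{k-1}{\ell-1}=\binom{k}{\ell}$ via a second application of Pascal's rule, while the boundary terms $\ell=0$ and $\ell=k$ account for $\binom{k}{0}\binom{i-k}{j}$ and $\binom{k}{k}\binom{i+k}{k+j}$ respectively. Combining everything yields exactly $\sum_{\ell=0}^{k}\binom{k}{\ell}\binom{2\ell+i-k}{\ell+j}$, closing the induction.

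The computation is mostly bookkeeping; the only subtle point is the double use of Pascal's rule (once horizontally on the upper index of the trinomial-triangle binomials, once vertically on the $\binom{k-1}{\ell}$ coefficients after index shift) and the need to track the boundary contributions $\ell=0$ and $\ell=k$ carefully so they are assigned to the correct binomial $\binom{k}{\ell}$. No deeper identity (Vandermonde, hypergeometric, etc.) is needed, so this inductive path should be the most economical proof.
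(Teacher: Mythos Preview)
Your induction is correct: applying the hypothesis to the three summands, collapsing $\binom{2\ell+i-k+1}{\ell+j}+\binom{2\ell+i-k+1}{\ell+j+1}=\binom{2\ell+i-k+2}{\ell+j+1}$, shifting $\ell\mapsto\ell-1$ in that piece, and then recombining via $\binom{k-1}{\ell}+\binom{k-1}{\ell-1}=\binom{k}{\ell}$ indeed reproduces the claimed sum, with the boundary terms $\ell=0$ and $\ell=k$ falling out exactly as you describe. The index ranges needed for the induction hypothesis are also satisfied in all three terms.

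This is, however, a genuinely different route from the paper's. The paper does \emph{not} induct directly on the recurrence for $\mathcal{H}$; instead it invokes Theorem~\ref{th:tetra_base} (the general expression $h^i_{j,k}=\sum_{r}\sum_{s}\binom{k}{s,k-r,r-s}t^{i-k+r}_{j+s}$, valid for arbitrary $\mathcal{T}$), substitutes $t^{i}_{j}=\binom{i}{j}$, factors the trinomial coefficient as $\binom{k}{\ell}\binom{\ell}{s}$, and then collapses the inner sum $\sum_{s=0}^{\ell}\binom{\ell}{s}\binom{i+\ell-k}{j+s}$ to $\binom{2\ell+i-k}{\ell+j}$ using a Vandermonde-type identity (Gould, eq.~(3.20)). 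So the paper's argument is conceptually ``general formula $+$ classical summation identity,'' while yours is a self-contained double use of Pascal's rule tailored to Pascal's triangle. Your proof is more elementary and independent of both Theorem~\ref{th:tetra_base} and the external identity; the paper's proof, on the other hand, makes visible how the general tetrahedron-coefficient representation specializes, and isolates the Vandermonde-type step as the precise reason the closed form exists. Contrary to your final remark, the paper \emph{does} need that deeper identity; you do not.
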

\begin{proof}
	
	Let the triangle \T\ be Pascal's triangle. Then, replacing the tetrahedron coefficients by  the suitable binomial coefficients we have 
\begin{eqnarray*}
	h^i_{j,k}
	&=&\sum_{\ell=0}^{k} \sum_{s=0}^{\ell} \binom{k}{s,k-\ell,\ell-s} t_{j+s}^{i-k+\ell}
	=\sum_{\ell=0}^{k} \sum_{s=0}^{\ell} \binom{k}{k-\ell,s,\ell-s} \binom{i+\ell-k}{j+s}\\
	&=&\sum_{\ell=0}^{k} \sum_{s=0}^{\ell} \binom{k}{k-\ell}\binom{k-(k-\ell)}{s} \binom{i+\ell-k}{j+s}\\
	&=&\sum_{\ell=0}^{k} \binom{k}{\ell} \sum_{s=0}^{\ell} \binom{\ell}{s} \binom{i+\ell-k}{j+s},
\end{eqnarray*}
where $0\leq k \leq i$, $0\leq j \leq i-k$.

On the other hand,  we use  identity \eqref{eq:binom_id}  for the binomial coefficients similar to Vandermonde's convolution formula, which is equation (3.20) in the book of Gould \cite{Gould}, if  $x=n+m$.
For any $0\leq k \leq m$  we have
\begin{equation}\label{eq:binom_id}
\sum_{i=0}^{n} \binom{n}{i} \binom{n+m}{i+k} = \binom{2n+m}{n+k}.
\end{equation}

\noindent Thus $$\sum_{s=0}^{\ell} \binom{\ell}{s} \binom{i+\ell-k}{j+s}=\binom{2\ell+i-k}{\ell+j}$$
proves the theorem.
\end{proof}

In the following, we give a theorem and some sequences generated by \H.

\begin{theorem} The tetrahedron coefficient transform sequence of  Pascal's triangle is the binomial transform sequence of the central binomial coefficients. 
\end{theorem}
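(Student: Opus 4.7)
The plan is to specialize Theorem~\ref{th:tetra_transf} to the edge \E\ opposite to \T, which is precisely the sequence $\{h_{0,n}^n\}$ identified earlier as the tetrahedron coefficient transform of \T. Since the previous theorem already establishes that $b_n=h_{0,n}^n$ whenever \T\ is taken as Pascal's triangle, essentially all the work is already done; I just need to read off the right specialization of formula~\eqref{eq:tetra_transf}.

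Concretely, I would set $i=k=n$ and $j=0$ in \eqref{eq:tetra_transf}. The exponent $2\ell+i-k$ collapses to $2\ell$, and the lower index $\ell+j$ collapses to $\ell$, so
\begin{equation*}
b_n \;=\; h_{0,n}^n \;=\; \sum_{\ell=0}^{n}\binom{2\ell}{\ell}\binom{n}{\ell}.
\end{equation*}
The right-hand side is by definition the binomial transform of the sequence $\bigl\{\binom{2\ell}{\ell}\bigr\}_{\ell\geq 0}$ of central binomial coefficients, which is exactly what the theorem asserts.

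There is no real obstacle here: the hard work was carried out in Theorem~\ref{th:tetra_base} (the general formula for $h_{j,k}^i$ in terms of \T) and in Theorem~\ref{th:tetra_transf} (the evaluation of the inner sum via the Vandermonde-like identity~\eqref{eq:binom_id}). The only thing worth double-checking is that the index ranges match, i.e.\ that the summation upper bound $\ell=k=n$ and lower bound $\ell=0$ transfer cleanly, and that no spurious terms arise when $j=0$. Since the formula in Theorem~\ref{th:tetra_transf} was proved for all admissible $(i,j,k)$ including $k=i$ and $j=0$, this specialization is valid, and the identification with the binomial transform of $\binom{2\ell}{\ell}$ is immediate.
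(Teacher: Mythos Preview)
Your proof is correct and follows essentially the same approach as the paper: specialize Theorem~\ref{th:tetra_transf} with $i=k=n$, $j=0$ to obtain $h_{0,n}^n=\sum_{\ell=0}^{n}\binom{2\ell}{\ell}\binom{n}{\ell}$ and recognize this as the binomial transform of the central binomial coefficients.
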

\begin{proof}
Let $j=0$ and $i=k=n$. Then from Theorem~\ref{th:tetra_transf} we gain 
\begin{equation*}
   h_{0,n}^n=\sum_{\ell=0}^{n}\binom{2\ell}{\ell}\binom{n}{\ell},
\end{equation*}
where sequence $b_n=h_{0,n}^n$ ($n=0,1,\ldots$) is the binomial transform of $\binom{2\ell}{\ell}$.
\end{proof}

\begin{rem}
	Let $j=j_0$ be fixed and put $k=i-w$, where $0\leq j_0\leq w\leq i$. Then the items of sequence $\{h_{j_0,i-w}^{i}\}_{i=w}^{\infty}$ (in Figure~\ref{fig:tranform_tetra_Pascal} directions with black arrows) are given by  
	\begin{equation*}
	h_{j_0,i-w}^{i}=\sum_{\ell=0}^{i-w}\binom{2\ell+w}{\ell+j_0}\binom{i-w}{\ell}.
	\end{equation*}
	Moreover, because of the symmetry property \eqref{eq:h_symmetry}, $h_{j_0,i-w}^{i}=h_{w-j_0,i-w}^{i}$ ($i=w, w+1, \ldots$) also holds.
\end{rem}

\begin{rem}
	Let $j=j_0$ and $k=k_0$ be fixed. Then sequence $\{h_{j_0,k_0}^{i}\}_{i=j_0+k_0}^{\infty}$ ($0\leq k_0\leq i$, $0\leq j_0\leq i-w-k_0$) is given by \eqref{eq:tetra_transf}. The terms of this sequence are bellow each other in Figure~\ref{fig:tranform_tetra_Pascal} and because of symmetry, we have $h_{j_0,k_0}^{i}=h_{i-(j_0+k_0),k_0}^i$ ($i=j_0+k_0, j_0+k_0+1, \ldots$), where direction of the sequence on the right-hand-side  is parallel to the right diagonal (right leg) of Pascal's triangle. According to Theorem~\ref{th:recursive_T}, these are $(j_0+k_0+1)^{\text{th}}$ order homogeneous linear recurrence sequences with formula \eqref{eq:recursive_T}.
\end{rem}

In the collection of "On-Line Encyclopedia of Integer Sequences" \cite{Sloane}, we found several sequences can be described by our method. Table ~\ref{table:seqences} shows them.
\begin{table}[h!]
	\centering
	\caption{Some sequences in OEIS \cite{Sloane}}
	\label{table:seqences}
	\begin{tabular}{||l|l|l||}
		\hline\hline
	Sequence& OEIS number & First few terms  \\ \hline\hline
		$\{b_n=h_{0,n}^{n}\}$   & A026375          &  $1,3,11,45,195,873,3989,18483,86515,\ldots$             \\ \hline
		$\{h_{0,n-1}^{n}\}$, $\{h_{1,n-1}^{n}\}$& A026378          & $1, 4, 17, 75, 339, 1558, 7247, 34016, 160795,  \ldots$ \\ \hline
		$\{h_{0,n-2}^{n}\}$, $\{h_{2,n-2}^{n}\}$ & A026388          & $1,5,24,114,541,2573,12275,58747,282003,\ldots$        \\ \hline
		$\{h_{0,n-3}^{n}\}$, $\{h_{3,n-3}^{n}\}$ & A034942          & $1, 6, 32, 163, 813, 4013, 19703, 96477, 471811, \ldots$        \\ \hline		
		$\{h_{1,n-2}^{n}\}$ & A085362, A026387 & $2,8,34,150,678,3116,14494,68032,321590,\ldots$        \\ \hline
		$\{h_{1,1}^{n}\}$, $\{h_{n-2,1}^{n}\}$ & A034856 & $4, 8, 13, 19, 26, 34, 43, 53, 64, 76, 89, 103, 118 ,\ldots$        \\ \hline
		$\{h_{2,1}^{n}\}$, $\{h_{n-3,1}^{n}\}$ & A008778 & $5, 13, 26, 45, 71, 105, 148, 201, 265, 341, 430,\ldots$        \\ \hline
		$\{h_{1,2}^{n}\}$, $\{h_{n-3,2}^{n}\}$ & A023545  & $17,34,58,90,131,182,244,318,405,506,622,\ldots$        \\ \hline\hline						
	\end{tabular}
\end{table}

\section{Further work}

Our tetrahedron construction has some connecting points to other known arithmetical triangles, its examination would be profitable. Now, we show some relations. Let us extend the tetrahedron by using the convention $\binom{i}{j}=0$ for $j\notin\{0,1,\ldots,i\}$. (Really, there are more extensions of Pascal's triangle.) This way we obtain the 3-dimensional arithmetical construction in Figure~\ref{fig:extend_tranform_tetra_Pascal}.

The triangle section $\{h_{j,1}^{i}\}$ parallel to Pascal's triangle is the so-called 3-Pascal triangle (A028262). 
The triangle (light-blue triangle in Figure~\ref{fig:extend_tranform_tetra_Pascal}) described by the elements $h_{j,i}^{i}$, where $i\in \mathbb{N}_0, -i\leq j\leq i$ is the "Gegenbauer functional" triangle $T(i,k)$ = $\text{GegenbauerC}(m,-i,-\frac{3}{2})$, where $m = k$ if $k<i$ else $2i-k$, for  $0\leq i$ and $0\leq k\leq 2i$ (see A272866). This conjecture is based on our calculation up to $i=20$.

More sequences and triangles can be found in OEIS, for example, A026376, A026374.

\begin{figure}[!ht]
	\centering
	\includegraphics[width=12cm,keepaspectratio]{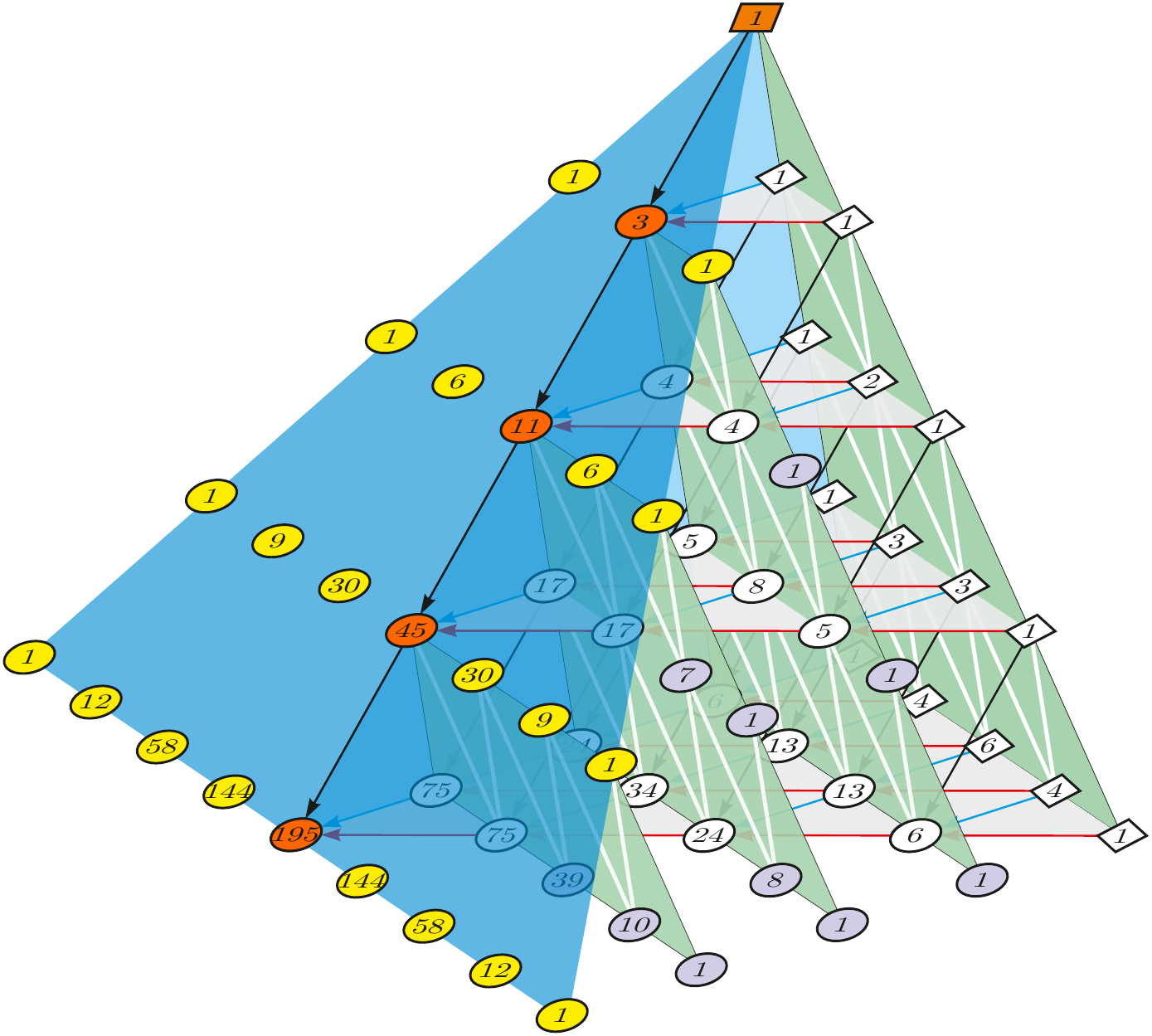}
	\caption{Extended tetrahedron coefficient construction based on Pascal's triangle}
	\label{fig:extend_tranform_tetra_Pascal}
\end{figure}

\section{Acknowledgement}

The author would like to thank the anonymous referee for carefully reading the manuscript and for his/her useful suggestions and improvements.

\newpage

 \end{document}